\newtheorem{theorem}{Theorem}[section]
\newtheorem{corollary}{Corollary}[section]
\newtheorem{lemma}{Lemma}[section]
\newtheorem{remark}{Remark}[section]
\newcommand{\lra}[1]{\langle #1 \rangle}
\newcommand{\rr}{\mathbf{R}}
\newcommand{\ttt}{{\mathbf{T}^n}}
\newcommand{\eff}{{\mathrm{eff}}}
\newcommand{\ve}{\varepsilon}
\newcommand{\mop}[1]{{\mathop{\mathrm{#1}}}}
\newcommand{\inv}{^{-1}}
\title{Subcritical perturbation of a locally periodic elliptic operator}
\subjclass{Primary: 35B27; Secondary: 35B25.}
\keywords{Homogenization, spectral asymptotics, singular perturbation, elliptic equation of second order.}
\author{Klas Pettersson}
\address{University of Tromsø, Norway.}
\email{klas.pettersson@uit.no}
\date{March 3, 2015. Revised May 12, 2016.}
\begin{document}

\begin{abstract}
We consider a singularly perturbed Dirichlet spectral problem for an elliptic operator of second order.
The coefficients of the operator are assumed to be locally periodic and oscillating in the scale $\ve$.
We describe the leading terms of the asymptotics of the eigenvalues and the eigenfunctions to the 
problem, as the parameter $\ve$ tends to zero, under structural assumptions on the potential.
More precisely, we assume that the local average of the potential has a unique global minimum point
in the interior of the domain and its Hessian is non-degenerate at this point.
\end{abstract}

\maketitle

\section{Introduction}

Consider the eigenvalues $\lambda$ and the eigenfunctions $u$ to the following 
Dirichlet spectral problem in a bounded Lipschitz domain\footnote{By a domain we mean a nonempty connected open set in the Euclidean space $\rr^n$.} $\Omega$ in $\rr^n$ with locally periodic coefficients:
\begin{equation}\label{eq:orig}
\begin{aligned}
- \mop{div}\big(  a\big(x,\frac{x}{\ve}\big) \nabla u \big) + \frac{1}{\ve^{\alpha}}c\big(x,\frac{x}{\ve}\big)u & = \lambda u, \quad x \in \Omega, \\
u & = 0, \,\,\,\,\quad x \in \partial \Omega.
\end{aligned}
\end{equation}
We are interested in the asymptotics of the eigenpairs $(\lambda^\ve, u^\ve)$ to \eqref{eq:orig} as the positive parameter
$\ve$ tends to zero.
The value of the parameter $\alpha$ is decisive for the asymptotic behavior of the spectrum. We assume that $0 < \alpha < 2$.

In the present paper we will construct the leading terms in the asymptotics of the eigenvalues $\lambda^\ve$ and an $L^2(\Omega)$ approximation to the eigenfunctions $u^\ve$ to~\eqref{eq:orig}.

We make precise assumptions on the coefficients below.
The differential operator will be assumed to be self-adjoint with real sufficiently smooth coefficients.
The spectrum is then real and discrete.
Also significant for the spectral asymptotics, the potential $c(x,y)$ will be assumed to have a local average with a global unique minimum point in the interior of the domain, and that the local average shows quadratic growth at this point.

The spectral problem \eqref{eq:orig} may be viewed as a perturbation problem for the 
operator $-\mop{div}( a(x,\ve\inv x) \nabla \cdot )$ with $\alpha \in \rr$, for $\ve$ tending to zero.
For $\alpha \le 0$ the perturbation is continuous, while for $\alpha > 0$ it shows singular behavior.
Under the assumption of quadratic growth at the unique minimum point for the corresponding averaged potentials, 
it appears natural to distinguish
four cases of asymptotic behavior for the singular perturbation problem: (i) $0 < \alpha < 4/3$, (ii) $4/3 \le \alpha < 2$, (iii) $\alpha = 2$, and (iv) $\alpha > 2$.

In case (i), the eigenfunctions to \eqref{eq:orig} localize in the scale $\ve^{\alpha/4}$ in the vicinity of a fixed point which is determined by the potential function $c(x,y)$, and the magnitude of the oscillations is $o(1)$ as $\ve$ tends to zero (see for example \cite{PaPe-2014}).
In case (iii), which we call the critical case, the eigenfunctions localize in the scale $\sqrt{\ve}$ at a fixed point which is determined by $a(x,y)$ and $c(x,y)$,
and the magnitude of the oscillations is $O(1)$ as $\ve$ tends to zero, which leads to a shift of order $O(1)$ in the
spectrum as demonstrated in \cite{AlPi-2002}.
Case (iv) has, it seems, not been considered.

Case (ii) is the subject of this paper.
Also here the eigenfunctions localize in the scale $\ve^{\alpha/4}$, as in (i) and (iii).
What makes this case different from both (i) and (iii) is that the magnitude of the oscillations in the
eigenfunctions $u^\ve_j$ are of order $o(1)$ but still big enough to introduce several big terms 
in the asymptotic series for the eigenvalues $\lambda^\ve_j$.
In particular, the bigger $\alpha$ is, the more terms are needed in the series for $\lambda^\ve_j$, and the more correctors are needed to damp the oscillations in $u^\ve_j$.
Moreover, in contrast to both (i) and (iii), the eigenfunctions localize in the vicinity 
of a point $\zeta_\ve$ which moves towards a fixed point but not faster than the rate of localization as $\ve$ tends to zero.
The closer $\alpha$ is to $2$, the slower $\zeta_\ve$ moves.
The position of the point of localization depends in this case on both $a(x,y)$ and $c(x,y)$.
In particular, the main contribution to the energy of the eigenfunctions is given by the second term in the asymptotic series approximation.
The analysis of this case has not been included in the previous studies of the subcritical case~\cite{PaPe-2014, ChPaPe-2014}, and our purpose
here is to fill this gap between (i) and (iii).

The number of terms in the approximations for the eigenvalues $\lambda^\ve_j$ and the
eigenfunctions $u_j^\ve$ given in Theorem \ref{tm:one} below is bounded from
below by $\alpha/(2-\alpha)$ asymptotically as $\alpha$ tends to the critical value $2$ of case (iii).
This is a drawback of our approach since the suggested approximations may be considered unsatisfactory
in a neighborhood of the critical value.

The homogenization of singular perturbed operators where
localization of eigenfunctions occur is well known but still undergoing
exploration.
Examples of scalar second order operators have been given in
\cite{Allaire5, Allaire7, Allaire8, Allaire9, piatnitski2012first, piat2012homogenization, piat2013localization, kreisbeck2014asymptotic, ferreira2014spectral, piatnitski2014ground, PaPe-2014}.
A transport equation has been studied in \cite{Allaire6}, and
scalar fourth order operators have been considered in \cite{AlCaPiSiVa-04, ChPaPe-2014}.
In \cite{AlCaPiSiVa-04} a system of equations have been studied with purely periodic singular perturbation
and locally periodic continuous perturbation.

The rest of this paper is devoted to the description of the leading terms in the spectral asymptotics for the eigenpairs to~\eqref{eq:orig}.

\subsection{Acknowledgment}

I thank the referee for a careful reading of the paper.

\section{Spectral asymtotics}\label{sec:result}

The domain $\Omega$ in \eqref{eq:orig} is assumed to contain the origin (see (A2) below).
The $n$-torus will be denoted by $\ttt$, and it is always realized such that its measure is one.
The average value of a function $v$ on $\ttt$ will be denoted by $\lra{v} = \int_\ttt v(y) dy$.

In expressions involving components of multilinear maps we use the summation convention over repeated indices
when no summation sign $\Sigma$ is written.
We also use the multiindex notation.

We make the following assumptions on the coefficients in~\eqref{eq:orig}:
\begin{enumerate}[($\text{A}$1)]
\item $a(x,y)$ is real, symmetric, belongs to $C^2(\overline{\Omega} \times \ttt)$, and $a(x,y)$ is coercive in $\Omega \times \ttt$: $C>0$, $a_{ij} \xi_{i}\xi_{j} \ge C |\xi|^2$, for all $\xi \in \rr^{n}$.
\item $0 < \alpha < 2$. $c(x,y)$ is real, strictly positive, and belongs to 
$C^{\kappa(\alpha)}(\overline{\Omega}, C^2(\ttt))$, where $q(\alpha)$ is the smallest natural number that is not strictly smaller than any of $2$ and $\alpha/(2 - \alpha)$.
The local average $\lra{c}(x) = \int_\ttt c(x,y) dy$ has a unique global minimum at $x = 0 \in \Omega$, with a non-degenerate Hessian $\nabla \nabla \lra{c}(0)$.
\end{enumerate}

By the Riesz-Schauder and the Hilbert-Schmidt theorems the spectrum of problem~\eqref{eq:orig} is described as follows.
Under the assumptions on the domain $\Omega$, (A1), and (A2), for each $\ve > 0$, the set of eigenvalues to~\eqref{eq:orig} is discrete and may be arranged as
\begin{align*}
0 & < \lambda_1^\ve < \lambda_2^\ve \le \lambda_3^\ve \le \cdots, &
\lim_{j \to \infty} \lambda_j^\ve & = \infty,
\end{align*}
where the eigenvalues are counted as many times as their finite multiplicity.
The eigenfunctions $u^\ve_j$ corresponding to the sequence form a Hilbert basis in $L^2(\Omega)$,
and we assume that they have been orthogonalized.

The spectral asymptotics for~\eqref{eq:orig} will be described using the effective equation
\begin{equation}\label{eq:peff}
\begin{aligned}
- \mop{div}\big( a^{\mathrm{eff}} \nabla p \big)
+ \frac{1}{2}(\nabla \nabla \lra{c}(0)z \cdot z) p
& = \eta p, \quad z \in \rr^n,
\end{aligned}
\end{equation}
where $a^\eff$ is the classical effective matrix corresponding to $a(0,x/\ve)$, defined by
\begin{align*}
a^\mathrm{eff}_{ij}  & = \lra{ a_{ij} }(0) + \lra{ a_{ik} \partial_{k} M_{j} }(0),
\end{align*}
and where $M_{m}(y) : \ttt \to \rr$ are the solutions, normalized by $\lra{M_{m}} = 0$, to 
\begin{align*}
-\mop{div}( a(0,y) \nabla M_m ) & = \partial_{y_i} a_{mi}(0,y), \quad y \in \ttt.
\end{align*}

We will also make use of the functions $N_k(x,y) : \Omega \times \ttt \to \rr$ that are defined as
follows.
Let $\kappa(\alpha)$ be the smallest natural number that is strictly greater than $\frac{\alpha}{2(2-\alpha)}$.
For notation purpose we let $N_0 = 1$.
For $1 \le k \le \kappa(\alpha)$ let $N_k$ be such that
\begin{equation}\label{eq:Nk}
\begin{aligned}
-\mop{div}_y ( a(x,y)\nabla_y N_k ) & =  \lra{ cN_{k-1} }(x) - c(x,y) N_{k-1}(x,y)
 + \sum_{ i = 0 }^{ k - 2 } \epsilon_i N_{ k - 1 - i }(x,y),
\end{aligned}
\end{equation}
with $x \in \overline{\Omega}$, $y \in \ttt$, and normalized by $\lra{N_k} = 0$,
and 
where $\epsilon_i$ are constants that we specify as follows.

Let $\kappa(\alpha)$ defined as in (A2).
Let
\begin{align}\label{eq:Cvezdef}
C_\ve(z) = \sum_{k=0}^{\kappa(\alpha)-1}P(k,\alpha,\ve,z),
\end{align}
where
$P(k,\alpha,\ve,z)$ is the Taylor expansion of $\ve^{-\alpha/2 + k(2-\alpha)}\lra{ cN_k }(\ve^{\alpha/4}z)$ in $z$ of order $q(\alpha)-k$ about $z = 0$.
For fixed $d > 0$, the polynomial function $C_\ve(z)$ has a unique minimum point $\zeta_\ve$ in the ball $|z| \le d\ve^{-\alpha/4 + \delta}$ with $0 < \delta < 2 - \alpha$, for all sufficiently small $\ve$.
This will be proved in Lemma~1 below.
The value of $C_\ve(z)$ at this point admits an expansion of the form
\begin{align}\label{eq:Czeta}
C_\ve(\zeta_\ve) = \sum_{k=0}^{\kappa(\alpha)-1}\ve^{-\alpha/2 + k(2-\alpha)}\epsilon_{k} + o(1),
\end{align}
as $\ve$ tends to zero, 
where $\epsilon_k$  depends only on $N_i$ for $i \le k + 1$, which is a consequence of Lemma~1 (see Section \ref{sec:leadingterms} for a discussion about this).
These are the constants $\epsilon_i$ we choose to use in the definition \eqref{eq:Nk} of $N_k$.

\begin{theorem}\label{tm:one}
Suppose that (A1) and (A2) are satisfied.
Let $\lambda^\ve_j$ and $u^\ve_j$ be the $j$th eigenvalue and eigenvector to~\eqref{eq:orig},
normalized by $\int_\Omega (u^\ve_j)^2 dx = \ve^{n\alpha/4}$.
Let $\zeta_k$ and $\epsilon_k$ be defined by~\eqref{eq:Czeta} and~\eqref{eq:zetam}.
Then, as $\ve$ tends to zero,
\begin{enumerate}[(i)]
\item $\lambda_j^\ve = \sum_{k = 0}^{\kappa(\alpha) - 1} \ve^{-\alpha + k(2 - \alpha)} \epsilon_k + \ve^{-\alpha/2} \eta_j + o(\ve^{-\alpha/2})$,
\item up to a subsequence,
\begin{align*}
u^\ve_j(x) & = p_j\Big( \frac{x - \sum_{k = 1}^{\kappa(\alpha) - 1} \ve^{k(2 - \alpha)} \zeta_k}{\ve^{\alpha/4}} \Big) + o(1) \text{ in } L^2(\Omega),
\end{align*}
\end{enumerate}
where $(\eta_j, p_j)$ is the $j$th eigenpair to \eqref{eq:peff} under a suitable orthogonalization and normalized by $\int_{\rr^n}p_j^2\, dx = 1$.
\end{theorem}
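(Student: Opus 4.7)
The plan is to combine a multiscale WKB-type ansatz centered at the moving point
$\zeta_\ve := \sum_{k=1}^{\kappa(\alpha)-1}\ve^{k(2-\alpha)}\zeta_k$ with a min-max upper bound
and a compactness lower bound carried out in the blown-up variable $z = (x-\zeta_\ve)/\ve^{\alpha/4}$.
The effective operator in~\eqref{eq:peff} is of harmonic-oscillator type; its eigenfunctions $p_j$
are Hermite-like and decay super-exponentially, so I can place a smooth cut-off supported in
$\{|z| \le d\ve^{-\alpha/4 + \delta}\} \subset \Omega$ without affecting the leading terms.

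First I would substitute the ansatz
\[
U_j^\ve(x) = \Big(1 + \sum_{k=1}^{\kappa(\alpha)-1}\ve^{k(2-\alpha)}N_k(x,x/\ve)\Big)
p_j\Big(\frac{x - \zeta_\ve}{\ve^{\alpha/4}}\Big)
\]
and the matching trial eigenvalue
$\Lambda_j^\ve = \sum_{k=0}^{\kappa(\alpha)-1}\ve^{-\alpha+k(2-\alpha)}\epsilon_k + \ve^{-\alpha/2}\eta_j$
into~\eqref{eq:orig} and match orders of $\ve$. The $\ve^{-2}$ terms vanish by the cell
problems~\eqref{eq:Nk} defining $N_k$. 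Taylor expanding the slowly varying coefficients
$\lra{cN_k}(x)$ around $\zeta_\ve$ and invoking the construction of $C_\ve$, the non-decaying
polynomial part in $z$ on the left-hand side collapses via~\eqref{eq:Czeta} to
$\sum_{k=0}^{\kappa(\alpha)-1}\ve^{-\alpha+k(2-\alpha)}\epsilon_k$ up to $o(\ve^{-\alpha/2})$.
What remains at order $\ve^{-\alpha/2}$, after dividing out the corrector factor, is precisely the
rescaled effective problem
$-\mop{div}(a^{\eff}\nabla_z p_j) + \tfrac12 (\nabla\nabla\lra{c}(0)z\cdot z)\, p_j = \eta_j p_j$,
which closes the ansatz.

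For the upper bound in (i), I would apply the min-max principle on the $j$-dimensional subspace
spanned by $\{U_1^\ve,\dots,U_j^\ve\}$, using $L^2(\Omega)$-orthogonality inherited (up to a
small error) from orthogonality of $\{p_j\}$ in $L^2(\rr^n)$, and showing that each Rayleigh
quotient equals $\Lambda_j^\ve$ modulo $o(\ve^{-\alpha/2})$; this is bookkeeping using
boundedness of $N_k$ and Gaussian decay of $p_j$ and its derivatives. For the lower bound and
(ii), I would rescale the true eigenfunctions as $\tilde u_j^\ve(z) := u_j^\ve(\zeta_\ve + \ve^{\alpha/4}z)$,
and combine the upper bound with the quadratic growth of $\lra{c}$ near $0$ to derive
$H^1_{\mathrm{loc}}(\rr^n)$-boundedness of $\tilde u_j^\ve$ and Agmon-type exponential decay for
large $|z|$. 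A two-scale convergence argument, with the correctors $N_k$ absorbing the
oscillations, then identifies any $L^2(\rr^n)$-limit of $\tilde u_j^\ve$ as an eigenfunction of
\eqref{eq:peff} with eigenvalue
$\eta = \lim \ve^{\alpha/2}\big(\lambda_j^\ve - \sum_{k=0}^{\kappa(\alpha)-1}\ve^{-\alpha+k(2-\alpha)}\epsilon_k\big)$,
and induction on $j$ via orthogonality matches indices between the true and the effective spectra.

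The main obstacle is the moving localization center. In contrast to both the very subcritical
regime $\alpha < 4/3$ (with a fixed limit center) and the critical case $\alpha = 2$, here $\zeta_\ve$
drifts to $0$ strictly more slowly than the localization scale $\ve^{\alpha/4}$, so I must justify
\emph{a posteriori} that the true eigenfunctions concentrate at $\zeta_\ve$ rather than at $0$ or
some other drifting point. This requires extracting from the minimizer property of $\zeta_\ve$
not only the expansion~\eqref{eq:Czeta} but a quantitative coercivity estimate
$C_\ve(z) - C_\ve(\zeta_\ve) \gtrsim |z-\zeta_\ve|^2$ on the relevant ball, which then fuels a
Persson-type exclusion argument for concentration elsewhere. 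The other delicate point is
controlling the accumulation of correctors $N_k$, whose number $\kappa(\alpha)$ diverges as
$\alpha \to 2$, while keeping all remainders uniformly $o(\ve^{-\alpha/2})$.
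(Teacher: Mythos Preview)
Your outline is sound and would yield the theorem, but it follows a genuinely different route from the paper. The paper does not build quasimodes and run a min--max/compactness sandwich with induction on $j$; instead it performs the change of variables $z=\ve^{-\alpha/4}x-\zeta_\ve$, factors the unknown as $u=p\,\hat r_\ve$ with $r_\ve=\sum_{k=0}^{\kappa(\alpha)}\ve^{k(2-\alpha)}N_k$ (note one more corrector than in your ansatz), and shows that the Green operator $G_\ve$ of the resulting shifted Dirichlet problem converges in the $L^2\to L^2$ operator norm to the Green operator $G_0$ of~\eqref{eq:bvplimit}. Coercivity is obtained exactly from the quantitative inequality you anticipate, $C_\ve(z+\zeta_\ve)-C_\ve(\zeta_\ve)\ge C|z|^2$ on the inner ball and a cruder bound outside, which yields the weighted a priori estimate $\|\nabla v_\ve\|+\|v_\ve\|+\||z|v_\ve\|\le C$; this weight supplies $L^2(\rr^n)$-compactness directly, and two-scale convergence of $\nabla v_\ve$ identifies the limit, so that uniform convergence of compact self-adjoint operators gives both eigenvalue asymptotics and index matching in one stroke. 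Your approach trades this abstract step for explicit test functions and Agmon decay, which is more hands-on and closer to the Vishik--Lyusternik scheme the paper mentions as an alternative; it has the advantage of potentially yielding rates, but you must be careful that $\kappa(\alpha)-1$ correctors suffice for the $o(\ve^{-\alpha/2})$ residual (the paper takes one extra $N_{\kappa(\alpha)}$ in $r_\ve$ precisely to push the last oscillatory remainder below threshold), and your orthogonality-induction step to pin down indices needs a little care when the limiting eigenvalue $\eta_j$ is multiple.
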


A proof of Theorem~\ref{tm:one} will be presented in the next section.

The leading terms in the series $\epsilon_j$ and $\zeta_j$ can be computed explicitly.
The precise expressions of the terms not presented in the following corollary of Theorem \ref{tm:one} depend on the value of $\alpha$. 
We explain this and give the details on how Corollary \ref{cor:one} is obtained in Section \ref{sec:leadingterms}.
See Remark \ref{rm:one} for a less explicit description of the result.

\begin{corollary}\label{cor:one}
Under the conditions of Theorem \ref{tm:one}, the leading terms in (i) and (ii) of Theorem~\ref{tm:one} are given by
\begin{align*}
\epsilon_0 & = \langle c \rangle (0), & \zeta_1 &  = - \nabla \nabla \langle c \rangle (0)\inv \nabla \langle cN_1 \rangle (0),           \\
\epsilon_1 & = \langle c N_1 \rangle (0),  &  \zeta_2 &  = \langle cN_2 \rangle (0) - \frac{1}{2}  \nabla \nabla \langle c \rangle (0)\inv \langle cN_1 \rangle (0). 
\end{align*}
Moreover, $\epsilon_0 > 0$ and $\epsilon_1 \le 0$ in general,
and $\epsilon_1 < 0$ if the potential $c(x,x/\ve)$ oscillates.
\end{corollary}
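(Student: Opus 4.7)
The plan is to evaluate the polynomial $C_\ve(z)$ and its minimizer $\zeta_\ve$ to the required leading orders, and then prove the sign of $\epsilon_1$ via an energy identity for $N_1$.

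First, I would read $\epsilon_0$ and $\epsilon_1$ off~\eqref{eq:Czeta} by expanding~\eqref{eq:Cvezdef} with $N_0 = 1$. The $k = 0$ block of $C_\ve$ contains the constant $\ve^{-\alpha/2}\lra{c}(0)$; its remaining monomials $\ve^{-\alpha/2+j\alpha/4} D^j \lra{c}(0)[z^{\otimes j}]/j!$, evaluated at $\zeta_\ve$ with $|\zeta_\ve| \le d\ve^{-\alpha/4+\delta}$ (from Lemma~1), are bounded by $\ve^{-\alpha/2+j\delta}$. The $j = 1$ term vanishes outright because (A2) gives $\nabla \lra{c}(0) = 0$, and for $j \ge 2$ the choice $\delta \in ((2-\alpha)/2,\, 2-\alpha)$ renders these contributions $o(\ve^{-\alpha/2 + (2-\alpha)})$. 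The same bound applied to the $k = 1$ block isolates the constant $\ve^{-\alpha/2 + (2-\alpha)}\lra{cN_1}(0)$ at its order. Hence $\epsilon_0 = \lra{c}(0)$ (which is positive since $c > 0$) and $\epsilon_1 = \lra{cN_1}(0)$.

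Next, I would determine $\zeta_1$ and $\zeta_2$ by perturbative minimization of $C_\ve$. Using $\nabla \lra{c}(0) = 0$, the leading balance in $\partial_z C_\ve(z) = 0$ reads
\begin{equation*}
\nabla\nabla\lra{c}(0)\, z + \ve^{2-5\alpha/4}\,\nabla\lra{cN_1}(0) + \cdots = 0,
\end{equation*}
so nondegeneracy of the Hessian of $\lra{c}$ gives $\zeta_\ve = -\ve^{2-5\alpha/4}\,\nabla\nabla\lra{c}(0)\inv \nabla\lra{cN_1}(0) + \cdots$. Converting to the original coordinate via $x_\ve = \ve^{\alpha/4}\zeta_\ve$ puts the leading shift at order $\ve^{2-\alpha}$ and identifies $\zeta_1$. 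For $\zeta_2$, I would refine the ansatz by one more order, gather the contributions from the cubic Taylor term of $\lra{c}$, the quadratic Taylor term of $\lra{cN_1}$, and the linear Taylor term of $\lra{cN_2}$, and invert the Hessian once more.

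For the sign of $\epsilon_1$, I would exploit~\eqref{eq:Nk} at $k = 1$ with $N_0 = 1$, which reads
\begin{equation*}
-\mop{div}_y\bigl(a(0,y)\nabla_y N_1\bigr) = \lra{c}(0) - c(0,y) \quad \text{on } \ttt.
\end{equation*}
Testing against $N_1$, integrating by parts on the torus, and using $\lra{N_1} = 0$ yields
\begin{equation*}
\int_\ttt a(0,y)\nabla_y N_1 \cdot \nabla_y N_1\, dy = -\lra{cN_1}(0).
\end{equation*}
Coercivity (A1) makes the left-hand side non-negative, hence $\epsilon_1 \le 0$; equality forces $\nabla_y N_1 \equiv 0$, and then $\lra{N_1} = 0$ gives $N_1 \equiv 0$, so the equation reduces to $c(0,y) = \lra{c}(0)$ on $\ttt$. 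Therefore $\epsilon_1 < 0$ whenever $c$ depends non-trivially on the fast variable at $x = 0$, which is the oscillation hypothesis.

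The main obstacle is the bookkeeping of $\ve$-exponents during the perturbative minimization: the scales $\ve^{-\alpha/2}$, $\ve^{2-5\alpha/4}$, $\ve^{2-3\alpha/2}$, $\ve^{4-9\alpha/4}$, and so on vary continuously in $\alpha \in [4/3, 2)$ and cluster as $\alpha \to 2$, so one must check that the leading formulas for $\epsilon_0$, $\epsilon_1$, $\zeta_1$, $\zeta_2$ receive no parasitic contributions from higher-$k$ blocks of $C_\ve$ at those particular orders.
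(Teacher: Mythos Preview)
Your proposal is essentially correct and follows the same route as the paper. The paper's argument in Section~\ref{sec:leadingterms} simply reads off $\epsilon_0,\epsilon_1,\zeta_1,\zeta_2$ from the closed formulas \eqref{eq:zetam} and \eqref{eq:epsilonm} already established in Lemma~\ref{lm:zeta}, whereas you recompute these leading terms directly from the definition \eqref{eq:Cvezdef} of $C_\ve$ and the equation $\nabla C_\ve=0$; since Lemma~\ref{lm:zeta} was itself proved by exactly that coefficient-matching, the two derivations are the same computation presented at different levels of abstraction. Your energy-identity argument for the sign of $\epsilon_1$ (test \eqref{eq:Nk} at $k=1$ against $N_1$, integrate by parts, use coercivity of $a$ and $\lra{N_1}=0$) is word-for-word the paper's argument.
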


By Theorem \ref{tm:one} and Corollary \ref{cor:one} we may explicitly give the leading terms of the asymptotics of the eigenvalues $\lambda_j^\ve$ for $0 < \alpha < 12/7$:
\begin{align*}
\lambda_j^\ve & = \ve^{-\alpha} \langle c \rangle(0) + \ve^{-\alpha/2} \eta_j + o(\ve^{-\alpha/2}), \tag{$0 < \alpha < 4/3$} \\
\lambda_j^\ve & = \ve^{-\alpha} \langle c \rangle(0) + \ve^{-\alpha + 2 - \alpha}\langle cN_1 \rangle(0)  +  \ve^{-\alpha/2} \eta_j + o(\ve^{-\alpha/2}). \tag{$4/3 \le \alpha < 12/7$}
\end{align*}

\begin{remark}\label{rm:one}
The result in Theorem \ref{tm:one} may be stated as follows after suppressing the explicit expansions.
Let $\eta_j^\ve$ and $p_j^\ve$ be defined by 
\begin{align}\label{eq:exact}
\lambda_j^\ve & = \frac{C_\ve(\zeta_\ve)}{\ve^{\alpha/2}} + \frac{\eta_j^\ve}{\ve^{\alpha/2}}, &
u^\ve_j(x) & = p^\ve_j\big( \frac{x}{\ve^{\alpha/4}} - \zeta_\ve \big)r_\ve\big( x, \frac{x}{\ve} \big),
\end{align}
for all sufficiently small $\ve > 0$, with $u_j^\ve$ extended by zero to $\rr^n$ and $r_\ve$ defined by~\eqref{eq:reps}.
Then as $\ve$ tends to zero,
\begin{itemize}
\item $\eta_j^\ve$ converges to $\eta_j$,
\item $p_j^\ve$ converges up to a subsequence to $p_j$ strongly in $L^2(\rr^n)$ and weakly in $H^1(\rr^n)$,
\item the factor $r_\ve(x,x/\ve)$ converges strongly to one in $L^2(\Omega)$ as $\ve$ tends to zero,
\item the point of localization for $u_j^\ve$ in $\Omega$, which is approximated by $\ve^{\alpha/4}\zeta_\ve$,
converges to the minimum point of $\lra{c}(x)$ as $\ve$ tends to zero: $\ve^{\alpha/4}\zeta_\ve = O(\ve^{2 - \alpha})$, and $\zeta_\ve = 0$ for $0 < \alpha < 4/3$,
\end{itemize}
where $\eta_j$ is the $j$th eigenvalue to \eqref{eq:peff} and $p_j$ is a corresponding eigenfunction.

By the regularity assumptions in (A1) and (A2), the functions $N_k$, and therefore also $r_\ve$, are at least in $C^2(\overline{\Omega} \times \mathbf{T}^n)$.
The function that maps $u^\ve_j( \ve^{\alpha/4}(z + \zeta_\ve) )$ to $p_j^\ve(z)$ is bicontinuous on $H^1(\rr^n)$ for all small enough $\ve$.
\end{remark}

\section{Proof of Theorem~\ref{tm:one}}\label{sec:proof}

The plan of the proof of the theorem is to make a suitable change of variables in equation~\eqref{eq:orig}
and proceed to show that in the new variables the Green operator of the corresponding Dirichlet boundary value problem converges uniformly.
Then we obtain the convergence of spectrum from operator theory (see \cite{anselone} and \cite[Lemma 2.6, Theorem 2.2]{AlCo}).
For the passage to the limit involving oscillating functions, we use the two-scale compactness (see \cite[Theorem 4.2]{zhikovtwoscale}, \cite[Theorem 1.2]{Al-1992}, and \cite[Theorem 1]{nguetseng}).
We refer also to~\cite[Chapter III]{oleinik} for an outline of a method of Vishik and Lyusternik~\cite{VishLust} that can be used to prove the convergence of spectrum for certain self-adjoint operators on Hilbert spaces.
Although we do not apply those arguments here, we believe that they would apply and would give the rates of convergence, and avoid the use of two-scale compactness,
because of the fairly high regularity assumptions we have made on the coefficients.
The rate of convergence is desirable is view of the error of $o(\ve^{-\alpha/2})$ in the approximation of the eigenvalues given in (i) in Theorem~\ref{tm:one}.

\begin{lemma}\label{lm:zeta}
Suppose that (A1) and (A2) are satisfied.
Let $d > 0$ and let $\delta$ be such that $0 < \delta < 2 - \alpha$.
Then for all sufficiently small $\ve$ the function $C_\ve(z)$
has a unique minimum point $\zeta_\ve$ in the ball of radius $d \ve^{-\alpha/4 + \delta}$ centered at $0$.
Moreover,
\begin{align}
\zeta_\ve & = \sum_{k=1}^{ \kappa(\alpha) - 1 } \ve^{-\alpha/4 + k(2 - \alpha)} \zeta_k + o( 1 ), \label{eq:zetaexp}
\end{align}
as $\ve$ tends to zero, where $\zeta_m \in \rr^n$ for $1 \le m \le \kappa(\alpha) - 1$ are defined by
\begin{align}\label{eq:zetam}
-\nabla\nabla \lra{c}(0) \zeta_m & = \nabla \lra{ cN_m }(0)
+ \sum_{k=0}^{\kappa(\alpha)-1} \sum_{ \gamma } \sum_{ j } \frac{D^\gamma \lra{cN_k}(0)}{\gamma!} \nabla z^\gamma|_{z = \zeta_j},
\end{align}
and where the sums over $\gamma$ and $j$ are taken over the sets defined by
$\max\{ 3 - k, 2 \} \le |\gamma| \le q(\alpha) - k$
and then $(|\gamma| - 1)j = m - k$.
The coefficients in the asymptotic formula \eqref{eq:Czeta} are for $0 \le m \le \kappa(\alpha) - 1$ given by
\begin{align}\label{eq:epsilonm}
\epsilon_m = \langle c N_m \rangle(0) + \sum_{j = m/2} \frac{1}{2}\nabla \nabla \langle c \rangle (0) \zeta_j \cdot \zeta_j + \sum_{j + k = m} \nabla \langle c N_k \rangle (0) \cdot \zeta_j.
\end{align}
\end{lemma}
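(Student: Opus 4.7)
The plan is to treat the minimization of $C_\ve(z)$ as a perturbation of the coercive quadratic form $\tfrac12(\nabla\nabla\lra{c}(0)z\cdot z)$, apply an implicit-function argument to the critical-point equation $\nabla C_\ve(\zeta_\ve)=0$, and read off the coefficients $\zeta_m$ and $\epsilon_m$ by matching powers of $\ve$.

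First, I would write $C_\ve$ explicitly as a polynomial in $z$,
\begin{align*}
C_\ve(z) = \sum_{k=0}^{\kappa(\alpha)-1} \sum_{|\gamma| \le q(\alpha) - k} \frac{D^\gamma\lra{cN_k}(0)}{\gamma!}\, \ve^{-\alpha/2 + k(2-\alpha) + |\gamma|\alpha/4}\, z^{\gamma},
\end{align*}
and isolate the distinguished block $(k,|\gamma|) = (0,2)$, whose $\ve$-prefactor equals one and whose coefficient is $\nabla\nabla\lra{c}(0)$; the block $(0,1)$ is absent because $0$ is a critical point of $\lra{c}$. On the ball $|z| \le d\ve^{-\alpha/4 + \delta}$ every remaining block contributes at most $\ve^{k(2-\alpha) + \delta(|\gamma|-2)} = o(1)$ to the Hessian of $C_\ve$, as $\delta < 2-\alpha$ and one of $k \ge 1$, $|\gamma| \ge 3$ holds. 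Thus $\nabla^2 C_\ve(z) = \nabla\nabla\lra{c}(0) + o(1)$ uniformly on this ball, so by (A2) the function $C_\ve$ is strictly convex there for all small enough $\ve$; comparing $C_\ve$ on the bounding sphere with $C_\ve(0)$, the coercive leading quadratic dominates the $o(1)$ corrections and places the minimizer strictly in the interior. Strict convexity then gives uniqueness of $\zeta_\ve$.

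Next I would insert the ansatz $\zeta_\ve = \sum_{j=1}^{\kappa(\alpha)-1} \ve^{-\alpha/4 + j(2-\alpha)} \zeta_j + r_\ve$ with $r_\ve = o(1)$ into $\nabla C_\ve(\zeta_\ve) = 0$. The block $(k,|\gamma|)=(0,2)$ contributes $\nabla\nabla\lra{c}(0)\zeta_\ve$, while the block $(m,1)$ contributes $\ve^{-\alpha/4 + m(2-\alpha)}\nabla\lra{cN_m}(0)$. The remaining blocks multiply $\nabla z^{\gamma}$ evaluated at $\zeta_\ve$; a multinomial expansion shows that each diagonal substitution $\zeta_\ve \mapsto \ve^{-\alpha/4 + j(2-\alpha)}\zeta_j$ produces a contribution of order $\ve^{-\alpha/4 + (k + (|\gamma|-1)j)(2-\alpha)}$. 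The constraint $(|\gamma|-1)j = m - k$ in \eqref{eq:zetam} is precisely the condition for such a contribution to land at order $\ve^{-\alpha/4 + m(2-\alpha)}$, and $|\gamma| \ge \max\{3-k, 2\}$ excludes exactly those terms already placed on the left-hand side of \eqref{eq:zetam} or isolated as $\nabla\lra{cN_m}(0)$ on the right. Matching orders and inverting $\nabla\nabla\lra{c}(0)$ solves recursively for $\zeta_1, \dots, \zeta_{\kappa(\alpha)-1}$, and the quantitative form of the strict convexity controls $r_\ve = o(1)$. Substituting the expansion into $C_\ve(\zeta_\ve)$ and collecting the coefficient of $\ve^{-\alpha/2 + m(2-\alpha)}$ then yields \eqref{eq:epsilonm}: the constant $\lra{cN_m}(0)$ from $(k,|\gamma|) = (m,0)$, the linear cross terms $\nabla\lra{cN_k}(0) \cdot \zeta_j$ with $j+k = m$ from $(k,1)$, and the diagonal quadratic $\tfrac12(\nabla\nabla\lra{c}(0)\zeta_{m/2} \cdot \zeta_{m/2})$ from $(0,2)$; remaining off-diagonal multilinear contributions of the same order cancel via the critical-point relations defining the $\zeta_j$.

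The main technical obstacle is the combinatorial bookkeeping of which multi-indexed monomials contribute to each order $\ve^{-\alpha/4 + m(2-\alpha)}$, respectively $\ve^{-\alpha/2 + m(2-\alpha)}$, and the verification that the Taylor truncation $|\gamma| \le q(\alpha) - k$ does not discard any non-negligible contribution within the expansion~\eqref{eq:Czeta}. Once that is in place the argument is a routine implicit-function / Lyapunov--Schmidt scheme driven by the non-degenerate Hessian of $\lra{c}$ at the origin.
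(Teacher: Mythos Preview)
Your proposal is correct and follows essentially the same route as the paper: expand $C_\ve$ as a polynomial in $z$, show that on the ball $|z|\le d\ve^{-\alpha/4+\delta}$ the block $(k,|\gamma|)=(0,2)$ dominates so that the critical-point equation has a unique solution, and then determine $\zeta_m$ and $\epsilon_m$ by matching powers of $\ve$ in $\nabla C_\ve(\zeta_\ve)=0$ and in $C_\ve(\zeta_\ve)$. The paper phrases the dominance directly as ``the positive definite quadratic form dominates all other terms of order two or greater'' and then checks that the linear term, being $O(\ve^{-\alpha/4+(2-\alpha)})$, cannot move the minimizer past the sphere of radius $d\ve^{-\alpha/4+\delta}$ when $\delta<2-\alpha$; your version recasts the same estimates as a uniform Hessian bound $\nabla^2 C_\ve=\nabla\nabla\lra{c}(0)+o(1)$ and a boundary-versus-center comparison, which is an equivalent packaging. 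Your explicit reading of the combinatorial constraint $(|\gamma|-1)j=m-k$ and your remark that the off-diagonal contributions to $\epsilon_m$ are absorbed via the critical-point relations are slightly more informative than the paper's one-line ``matching of coefficients'', but the underlying argument is the same.
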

\begin{remark}\label{rem:error}
For $0 < \alpha < 4/3$, the exact formulas $\zeta_\ve = 0$ and $C_\ve(\zeta_\ve) = \ve^{-\alpha/2}\epsilon_0 = \ve^{-\alpha/2}\langle c \rangle(0)$ hold.
\end{remark}
\begin{proof}[Proof of Lemma \ref{lm:zeta}]
By the definition \eqref{eq:Cvezdef} of $C_\ve(z)$, after separating the powers of $z$,
\begin{align*}
C_\ve(z)
& = %
\sum_{k=0}^{\kappa(\alpha)-1} \ve^{-\alpha/2 + k(2 - \alpha)} \lra{ cN_k }(0) \\
& \quad + \frac{1}{2}\nabla\nabla\lra{c}(0)z \cdot z + \sum_{k=1}^{\kappa(\alpha)-1} \ve^{-\alpha/4 + k(2 - \alpha)} \nabla\lra{ cN_k }(0) \cdot z \\
& \quad + \sum_{3 \le |\gamma| \le q(\alpha)} \ve^{-\alpha/2 + |\gamma| \alpha/4}\frac{ D^\gamma \lra{ c }(0) }{ \gamma! } z^\gamma \\
& \quad + \sum_{k=1}^{\kappa(\alpha)-1} \sum_{2 \le |\gamma| \le q(\alpha)-k} \ve^{-\alpha/2 + k(2 - \alpha) + |\gamma| \alpha/4}\frac{ D^\gamma \lra{ cN_k }(0) }{ \gamma! } z^\gamma.
\end{align*}
Inside the ball $|z| \le d\ve^{ -\alpha/4 + \delta }$ we have for any $\gamma$ with $2 \le |\gamma| \le q(\alpha)$ that
\begin{align*}
\ve^{-\alpha/2 + |\gamma| \alpha/4} | z^{\gamma} | & \le C \ve^{ (|\gamma| -2)\delta }|z|^2 \\
\ve^{-\alpha/2 + 2 - \alpha + |\gamma| \alpha/4} | z^{\gamma} | & \le C \ve^{ 2 - \alpha + (|\gamma|-2)\delta }|z|^2,
\end{align*}
where the constants depend on $q(\alpha)$, $d$, and $n$.
For all sufficiently small $\ve$, the positive definite quadratic form $\frac{1}{2}\nabla\nabla\lra{c}(0)z \cdot z$ therefore dominates all other terms of order two or greater, for $|\gamma| \ge 3$ and $|\gamma| \ge 2$, respectively, since $0 < \alpha < 2$.
Since $0 < \delta < 2 - \alpha$, the linear term does not perturb the minimum of point of
$\frac{1}{2}\nabla\nabla\lra{c}(0)z \cdot z$ to be located outside the ball $|z| \le d\ve^{-\alpha/4+\delta}$ for any $\ve$ small enough.
Therefore $\zeta_\ve$ is uniquely defined by $\nabla C_\ve(z) = 0$ for all $\ve$ small enough.
The equation $\nabla C_\ve(z) = 0$ also uniquely determines $\zeta_m$ by matching of coefficients, which establishes \eqref{eq:zetaexp} with \eqref{eq:zetam}, and then \eqref{eq:Czeta} with \eqref{eq:epsilonm}.
\end{proof}

Lemma \ref{lm:zeta} justifies the definitions of $N_k$ by \eqref{eq:Nk} under the hypotheses (A1) and (A2).
Let $r_\ve(x,y)$ be defined by
\begin{align}\label{eq:reps}
r_\ve(x,y) & = \sum_{k=0}^{\kappa(\alpha)} \ve^{k(2-\alpha)}N_k(x,y).
\end{align}

\begin{proof}[Proof of Theorem~\ref{tm:one}]
The differential operator in~\eqref{eq:orig},
\begin{align*}
A_\ve u = -\mop{div}\big( a\big( x, \frac{x}{\ve} \big)\nabla u \big)
+ \frac{1}{\ve^\alpha} c\big( x, \frac{x}{\ve}\big) u,
\end{align*}
takes the form
\begin{align*}
\widetilde{A}_\ve p & = -\mop{div}( \hat r_\ve^2 \hat a^\ve \nabla p )
+ \Big( \! -\hat r_\ve \mop{div}( \hat a^\ve \nabla \hat r_\ve ) + \frac{\hat c^\ve}{\ve^{\alpha/2}}\hat r_\ve^2 \Big) p,
\end{align*}
after adding the factor $\hat r_\ve$,
in the variables $z$ and $p$ defined by
\begin{align}\label{eq:newvariables}
z & = \frac{x}{\ve^{\alpha/4}} - \zeta_\ve, &
p(z) \hat r_\ve(z) & = \frac{u(\ve^{\alpha/4}(z + \zeta_\ve))}{\ve^{\alpha/2}},
\end{align}
where $\hat r_\ve(z)$, $\hat a^\ve(z)$, and $\hat c^\ve(z)$ are the functions given by
evaluation of $r_\ve(x,y)$, $a(x,y)$, and $c(x,y)$, respectively,
at $x = \ve^{\alpha/4}(z + \zeta_\ve)$ and $y = \ve^{\alpha/4 - 1}(z + \zeta_\ve)$.
The change of variables~\eqref{eq:newvariables} is well defined for all sufficiently small $\ve > 0$, under the hypotheses (A1) and (A2), using Riesz-Frech\'et representations and Lemma~\ref{lm:zeta}.

The eigenvalues of the self-adjoint elliptic operator $\widetilde{A}_\ve$ with domain $H^1_0(\Omega_\ve)$ need not all be strictly positive, where $\Omega_\ve = \ve^{-\alpha/4}\Omega + \zeta_\ve$.
We therefore make a shift of the spectrum and consider for data $f_\ve$ weakly convergent to $f$ in $L^2(\rr^n)$ the boundary value problem
\begin{equation}\label{eq:bvp}
\begin{aligned}
\widetilde{A}_\ve v - C_\ve(\zeta_\ve) \hat r_\ve^2 v + C \hat r_\ve^2 v
& = f_\ve \hat r_\ve^2, \quad z \in \Omega_\ve,\\
v & = 0, \hspace{.5cm} \quad z \in \partial \Omega_\ve.
\end{aligned}
\end{equation}
Let $G_\ve : L^2(\rr^n) \to L^2(\rr^n)$ be the Green operator that maps $g$ to
the solution $v$ to~\eqref{eq:bvp} with $f_\ve = g$, extended by zero to $\rr^n$.

We claim that $G_\ve$ converges in the $L^2 \to L^2$ norm to 
the Green operator $G_0 : L^2(\rr^n) \to L^2(\rr^n)$ that maps $f$ to the solution $v$ to the boundary value problem
\begin{equation}\label{eq:bvplimit}
\begin{aligned}
-\mop{div}( a^\eff \nabla v )
+ \frac{1}{2}( \nabla\nabla \lra{c}(0)z  \cdot z ) v + C v
& = f , \quad z \in \rr^n.
\end{aligned}
\end{equation}

We first check that there is a positive constant $C$ such that the bilinear form corresponding to the operator in~\eqref{eq:bvp} is coercive on $H^1_0(\Omega_\ve)$ for all sufficiently small $\ve$.
For any positive constant $C$, $-\mop{div} (\hat r^2_\ve \hat a_\ve \nabla \cdot) + C \hat r^2_\ve$ is coercive for all $\ve$ small enough.
It therefore suffices to bound from below the integral 
\begin{align}
\int_{\Omega_\ve} \! \Big( \!  
-\mop{div}( \hat a^\ve \nabla \hat r_\ve )
+ \frac{\hat c^\ve}{\ve^{\alpha/2}}\hat r_\ve 
- C_\ve(\zeta_\ve)\hat r_\ve
  \Big)\hat r_\ve v^2 \,dz, \quad v \in H^1(\rr^n).\label{eq:intrve}
\end{align}

We will consider the integrand in \eqref{eq:intrve} for big and small values of $|z|$ separately.

For big $|z|$ we write
\begin{align*}
& -\mop{div}( \hat a^\ve \nabla \hat r_\ve ) + \frac{\hat c^\ve}{\ve^{\alpha/2}}\hat r_\ve - C_\ve(\zeta_\ve) \hat r_\ve \\
& \quad =
-\mop{div}( \hat a^\ve \nabla \hat r_\ve )
+ \ve^{-2(1- \alpha/4)}[\mop{div}_y( a \nabla_y r_\ve )]\big(  \ve^{\alpha/4}(z + \zeta_\ve), \frac{z + \zeta_\ve}{\ve^{1 - \alpha/4}}  \big) \\
& \qquad + \Big[- \ve^{-2(1- \alpha/4)}\mop{div}_y( a \nabla_y r_\ve ) + \frac{c - \lra{c}}{\ve^{\alpha/2}} r_\ve \Big]\!\big(  \ve^{\alpha/4}(z + \zeta_\ve), \frac{z + \zeta_\ve}{\ve^{1 - \alpha/4}}  \big) \\
& \qquad
+ \frac{\lra{c}( \ve^{\alpha/4}(z+\zeta_\ve) )}{\ve^{\alpha/2}}\hat r_\ve   -   C_\ve(\zeta_\ve) \hat r_\ve.
\end{align*}
Because $0 < \alpha < 2$, and by the choice~\eqref{eq:reps} of $r_\ve$, 
\begin{align*}
 -\mop{div}( \hat a^\ve \nabla \hat r_\ve )
+ \ve^{-2(1- \alpha/4)}[\mop{div}_y( a \nabla_y r_\ve )]\big(  \ve^{\alpha/4}(z + \zeta_\ve), \frac{z + \zeta_\ve}{\ve^{1 - \alpha/4}}  \big) & = o(1),
\end{align*}
and, after using the definition~\eqref{eq:Nk} of $N_k$,
\begin{align*}
\Big[- \ve^{-2(1- \alpha/4)}\mop{div}_y( a \nabla_y r_\ve ) + \frac{c - \lra{c}}{\ve^{\alpha/2}} r_\ve \Big]\!\big(  \ve^{\alpha/4}(z + \zeta_\ve), \frac{z + \zeta_\ve}{\ve^{1 - \alpha/4}}  \big) 
& = O(\ve^{-\alpha/2 + 2 - \alpha}),
\end{align*}
as $\ve$ tends to zero.
By Lemma \ref{lm:zeta} and Remark \ref{rem:error}, $\epsilon_0 = \lra{c}(0)$ and $\ve^{\alpha/4}\zeta_\ve = O(\ve^{2 - \alpha})$ as $\ve$ tends to zero.
It follows that
\begin{align*}
\frac{\lra{c}( \ve^{\alpha/4}(z+\zeta_\ve) )}{\ve^{\alpha/2}}\hat r_\ve   -   C_\ve(\zeta_\ve) \hat r_\ve
& = \frac{\lra{c}( \ve^{\alpha/4}z ) - \lra{c}(0)}{\ve^{\alpha/2}} + O(\ve^{-\alpha/2 + 2 - \alpha}),
\end{align*}
as $\ve$ tends to zero.
By hypothesis (A2),
\begin{align*}
\frac{\lra{c}( \ve^{\alpha/4}z ) - \lra{c}(0)}{\ve^{\alpha/2}} & \ge C |z|^2.
\end{align*}
Therefore, there exist positive constants $C$ and $d$ such that
\begin{align}\label{eq:outside}
\Big( \!
-\mop{div}( \hat a^\ve \nabla \hat r_\ve )
+ \frac{\hat c^\ve}{\ve^{\alpha/2}}\hat r_\ve 
- C_\ve(\zeta_\ve)\hat r_\ve
  \Big)\hat r_\ve
  & \ge C |z|^2,
\end{align}
for all sufficiently small $\ve$ and all $z$ such that $|z| \ge d\ve^{-\alpha/4 + \delta}$ with $0 < \delta \le (2 - \alpha)/2$, because then
$-\alpha/2 + 2 - \alpha \ge -\alpha/2 + 2 \delta$.

We turn to \eqref{eq:intrve} for $|z| < d\ve^{-\alpha/4 + \delta}$.
By the choice of $\hat r_\ve$, making explicit use of the definition of $N_k$, and using the expansion \eqref{eq:Czeta} by Lemma~\ref{lm:zeta} to cancel the terms involving $\epsilon_k$,
\begin{align}
& -\mop{div}( \hat a^\ve \nabla \hat r_\ve ) + \frac{\hat c^\ve}{\ve^{\alpha/2}}\hat r_\ve - C_\ve(\zeta_\ve) \hat r_\ve \notag\\
& \quad =
-\mop{div}( \hat a^\ve \nabla \hat r_\ve )
+ \ve^{-2(1- \alpha/4)}[\mop{div}_y( a \nabla_y r_\ve )]\big(  \ve^{\alpha/4}(z + \zeta_\ve), \frac{z + \zeta_\ve}{\ve^{1 - \alpha/4}}  \big) \notag\\
& \qquad + \Big[- \ve^{-2(1- \alpha/4)}\mop{div}_y( a \nabla_y r_\ve ) + \frac{c - \lra{c}}{\ve^{\alpha/2}} r_\ve \Big]\!\big(  \ve^{\alpha/4}(z + \zeta_\ve), \frac{z + \zeta_\ve}{\ve^{1 - \alpha/4}}  \big)
 \notag\\
& \qquad
+ \frac{\lra{c}( \ve^{\alpha/4}(z+\zeta_\ve) )}{\ve^{\alpha/2}} - C_\ve(z + \zeta_\ve) \notag\\
& \qquad
+ \frac{\lra{c}( \ve^{\alpha/4}(z+\zeta_\ve) )}{\ve^{\alpha/2}}(\hat r_\ve - 1) - C_\ve(\zeta_\ve) (\hat r_\ve - 1) \notag\\
& \qquad + C_\ve(z + \zeta_\ve) - C_\ve(\zeta_\ve) \notag\\
& \quad = C_\ve(z + \zeta_\ve) - C_\ve(\zeta_\ve) + o(1),\label{eq:aux}
\end{align}
as $\ve$ tends to zero, if $\delta$ is such that $\frac{2 - \alpha}{2} \le \delta < 2$ by the choice of $\kappa(\alpha)$.
In the last step we used Taylor's theorem for $0 \le k < \kappa(\alpha)$:
\begin{align*}
& \ve^{-\alpha/2 + k(2-\alpha)}\lra{cN_k}(\ve^{\alpha/4}(z+\zeta_\ve)) \\
& \quad = \sum_{|\gamma| \le q(\alpha) - k} \ve^{-\alpha/2 + k(2 - \alpha) + |\gamma|\alpha/4}
\frac{D^\gamma \lra{cN_k}(0)}{\gamma!} (z + \zeta_\ve)^\gamma \\
& \qquad + \ve^{-\alpha/2 + k(2 - \alpha) + (q(\alpha)-k)\alpha/4}\sum_{|\gamma| = q(\alpha) - k} h_\gamma (\ve^{\alpha/4}(z+\zeta_\ve))(z + \zeta_\ve)^\gamma,
\end{align*}
for some $h_\gamma$ with $\lim_{x \to 0} h_\gamma(x) = 0$, and 
\begin{align*}
\ve^{-\alpha/2 + k(2 - \alpha) + (q(\alpha)-k)\alpha/4}\sum_{|\gamma| = q(\alpha) - k} h_\gamma (\ve^{\alpha/4}(z+\zeta_\ve))(z + \zeta_\ve)^\gamma & = o(1),
\end{align*}
as $\ve$ tends to zero, since $\ve^{\alpha/4}\zeta_\ve = o(1)$ and $|z| \le C\ve^{-\alpha/4 + \delta}$.
Indeed, if $\frac{2 - \alpha}{2} \le \delta < 2$,
\begin{align*}
& -\frac{\alpha}{2} + k(2 - \alpha) + (q(\alpha)-k)\frac{\alpha}{4} + (q(\alpha) - k)( -\frac{\alpha}{4} + \delta ) \ge 0,
\end{align*}
because $q(\alpha)$ has been chosen in (A2) to be such that $-\alpha + q(2-\alpha) \ge 0$.

For $| z | < d \ve^{-\alpha/4 + \delta}$,
the Hessian of $C_\ve(z)$ at $\zeta_\ve$ satisfies
$\nabla\nabla C_\ve(\zeta_\ve) = \nabla \nabla \lra{c}(0) + o(1)$ as $\ve$ tends to zero.
Therefore by (A2) there exists a positive constant $C$ such that
for all sufficiently small $\ve$,
\begin{align}\label{eq:cequi}
C_\ve(z + \zeta_\ve) - C_\ve(\zeta_\ve) & \ge C |z|^2.
\end{align}

By \eqref{eq:outside}--\eqref{eq:cequi}, we see that we may choose $\delta = \frac{2 - \alpha}{2}$ to verify that the estimate in \eqref{eq:outside} is valid for any $z$.
Therefore, there exists a positive constant $C$ such that for any $v \in H^1(\rr^n)$ we have
\begin{align}
 \int_{\Omega_\ve} \!\!\Big( \! 
-\mop{div}( \hat a^\ve \nabla \hat r_\ve )
+ \frac{\hat c^\ve}{\ve^{\alpha/2}}\hat r_\ve 
- C_\ve(\zeta_\ve)\hat r_\ve
  \Big)\hat r_\ve v^2 \,dz
  & \ge C \int_{\Omega_\ve} |z|^2 v^2 \,dz,\label{eq:coef}
\end{align}
for all sufficiently small $\ve$.

It follows from~\eqref{eq:coef} that there exist positive constants $C$ and $C_1$ such that
\begin{align*}
& \int_{\Omega_\ve} \hat r_\ve^2 \hat a^\ve \nabla v \cdot \nabla v \, dz
+ \int_{\Omega_\ve} \Big( \! -\hat r_\ve \mop{div}( \hat a^\ve \nabla \hat r_\ve ) + \frac{\hat c^\ve}{\ve^{\alpha/2}}\hat r_\ve^2 \Big) v^2 \, dz \\
& \qquad + (-C_\ve(\zeta_\ve) + C)\int_{\Omega_\ve}  \hat r_\ve^2 v^2 \,dz \\
& \quad \ge C_1 ( \| \nabla v \|^2_{ L^2(\Omega_\ve) } + \| v \|^2_{ L^2(\Omega_\ve) } + \| |z| v \|^2_{ L^2(\Omega_\ve) } ),
\end{align*}
for all $v \in H^1(\rr^n)$ and all sufficiently small $\ve > 0$.
This inequality gives the coerciveness of the bilinear form corresponding to the differential operator in~\eqref{eq:bvp}.
Moreover, the term weighted by $|z|^2$ gives compactness in $L^2(\rr^n)$.

Let $v_\ve$ be the sequence of solutions to~\eqref{eq:bvp} in $H^1_0(\Omega_\ve)$ that are extended by zero to $\rr^n$.
By the strong convergence of $\hat r_\ve^2$ (to one) and the boundedness of $f_\ve$, we have the a priori estimate
\begin{align*}
\| \nabla v_\ve \|_{L^2(\rr^n)}
+
\| v_\ve \|_{L^2(\rr^n)}
+
\| |z| v_\ve \|_{L^2(\rr^n)}
& \le 
C.
\end{align*}
It follows that there exists a subsequence, we still denote it by $\ve$, such that $v_\ve$ converges weakly to some $v$ in $H^1(\rr^n)$, and by compactness, $v_\ve$ converges strongly to $v$ in $L^2(\rr^n)$.
Moreover, by two-scale compactness, 
$\nabla v_\ve$ converges to $\nabla v + \nabla_\xi M_k(\xi) \partial_k v$ weakly two-scale in $L^2(\rr^n)$ in the scale $1 - \alpha/4$, along a subsequence.

By passing to the limit in the variational form of equation~\eqref{eq:bvp} we see that any convergent subsequence of $v_\ve$ converges strongly in $L^2(\rr^n)$, and weakly in $H^1(\rr^n)$, to the unique solution $v$ to equation~\eqref{eq:bvplimit}.

Using the compactness of $G_\ve$ and $G_0$, and the convergence of the solution to~\eqref{eq:bvp} to the solution to~\eqref{eq:bvplimit}, we conclude that $\| G_\ve - G_0 \|_{L^2 \to L^2}$ goes to zero as $\ve$ tends to zero.

The operators $G_\ve$ and $G_0$ are compact and self-adjoint, and $G_\ve$ converges uniformly
to $G_0$.
It follows that the spectrum of $G_\ve$ converges to the spectrum of $G_0$ in the sense of the
statement of the theorem.
A shift back by $C$ concludes the proof.
\end{proof}

\section{Computing the leading terms}\label{sec:leadingterms}

In this section we show how to obtain explicit expressions for the leading terms in the asymptotic series of Theorem \ref{tm:one}
and in that way prove Corollary \ref{cor:one}.
We also tabulate the ranges for $\alpha$ that correspond to distinct pairs of $(\kappa(\alpha),q(\alpha))$, for small values of $\alpha$.

A careful look at the definitions of $N_k$, $\epsilon_k$, and $\zeta_k$ reveals that  
the terms in the sequences can be computed in the following order:
$
N_1, \epsilon_0, \zeta_1, 
N_2, \epsilon_1, \zeta_2, 
N_3, \epsilon_2, \zeta_3, 
\cdots . 
$
The first few terms are defined by the following equations.
By \eqref{eq:Nk},
\begin{align*}
- \mop{div} ( a\nabla N_1 ) & = \langle c \rangle - c \\
- \mop{div} ( a\nabla N_2) & = \langle cN_1 \rangle - cN_1 + \epsilon_0 N_1 \\
- \mop{div} ( a\nabla N_3 ) & = \langle cN_2 \rangle - cN_2 + \epsilon_0 N_2 + \epsilon_1 N_1 \\
- \mop{div} ( a\nabla N_4 ) & = \langle cN_3 \rangle - cN_3 + \epsilon_0 N_3 + \epsilon_1 N_2 + \epsilon_2 N_1 \\
\vdots
\end{align*}
By \eqref{eq:zetam} and \eqref{eq:epsilonm} in Lemma \ref{lm:zeta},
\begin{align*}
\epsilon_0 & = \langle c \rangle (0) \\
\epsilon_1 & = \langle c N_1 \rangle (0) \\
\epsilon_2 & = \langle c N_2 \rangle (0)  + \frac{1}{2} \nabla \nabla \langle c \rangle (0) \zeta_1 \cdot \zeta_1 + \nabla \langle c N_1 \rangle(0) \cdot \zeta_1 \\
\epsilon_3 & = \langle c N_3 \rangle (0)  + \nabla \langle c N_1 \rangle(0) \cdot \zeta_2 + \nabla \langle c N_2 \rangle(0) \cdot \zeta_1 \\
\vdots
\end{align*}
and
\begin{align*}
- \nabla \nabla \langle c \rangle (0) \zeta_1 & = \nabla \langle cN_1 \rangle (0) \\
- \nabla \nabla \langle c \rangle (0) \zeta_2 & = \nabla \langle cN_2 \rangle (0)  \\
                        & \quad                 + \sum_{3 = |\gamma| \le q(\alpha)} \frac{D^\gamma \langle c \rangle(0)}{\gamma!} \nabla z^\gamma|_{z = \zeta_1} \\
                           & \quad                       + \sum_{2 = |\gamma| \le q(\alpha) - 1} \frac{D^\gamma \langle cN_1 \rangle(0)}{\gamma!} \nabla z^\gamma|_{z = \zeta_1} \\
- \nabla \nabla \langle c \rangle (0) \zeta_3 & = \nabla \langle cN_3 \rangle (0)  \\
                        & \quad                 + \sum_{4 = |\gamma| \le q(\alpha)} \frac{D^\gamma \langle c \rangle(0)}{\gamma!} \nabla z^\gamma|_{z = \zeta_1} \\
                           & \quad                       + \sum_{3 = |\gamma| \le q(\alpha) - 1} \frac{D^\gamma \langle cN_1 \rangle(0)}{\gamma!} \nabla z^\gamma|_{z = \zeta_1} \\
                                                      & \quad                       + \sum_{4 = |\gamma| \le q(\alpha) - 2} \frac{D^\gamma \langle cN_2 \rangle(0)}{\gamma!} \nabla z^\gamma|_{z = \zeta_2} \\
\vdots                            
\end{align*}
The last three equations show that only a few of the terms $\epsilon_k$ and $\zeta_k$ have simple explicit expressions that do not depend on $\alpha$. 
These particular terms are
\begin{align*}
\epsilon_0 & = \langle c \rangle (0), & \zeta_1 &  = - \nabla \nabla \langle c \rangle (0)\inv \nabla \langle cN_1 \rangle (0),           \\
\epsilon_1 & = \langle c N_1 \rangle (0),  &  \zeta_2 &  = \langle cN_2 \rangle (0) - \frac{1}{2}  \nabla \nabla \langle c \rangle (0)\inv \langle cN_1 \rangle (0). 
\end{align*}
Under our hypotheses (A1) and (A2), we have in particular,
\begin{align*}
\epsilon_0 &  > 0, & 
\epsilon_1 & \le 0,
\end{align*}
where the second inequality is strict precisely when $c(x,y)$ is not constant in $y$.
Indeed, $\langle c \rangle (0) > 0$ by assumption.
To see the negativity of the second term one uses $N_1$ as a test function in the variational form of the equation for $N_1$ to obtain
\begin{align*}
\langle c N_1 \rangle(0) & = - \int_{\mathbf{T}^n} a(0,y) \nabla_y N_1(0,y) \cdot \nabla_y N_1(0,y) \, dy < 0,
\end{align*}
because $N_1(0,y)$ is nonzero if $c(x,y)$ is not constant in $y$, using the positive definiteness of $a(0,y)$ twice.
Otherwise $\langle c N_1 \rangle(0) = 0$.

Corollary \ref{cor:one} has now follows from Theorem \ref{tm:one}.\\

In Theorem \ref{tm:one} we have established asymptotic series approximations for the eigenpairs $(\lambda_j^\ve, u_j^\ve)$
to the spectral problem~\eqref{eq:orig}.
The number of terms that are included in the series depend on the value of $\kappa(\alpha)$, and most terms depend on the value of $q(\alpha)$.
The function $\kappa(\alpha)$ is right-continuous and jumps at $\alpha = 4b/(1+2b)$ for $b = 1, 2, \ldots$.
The function $q(\alpha)$ is left-continuous and jumps at $\alpha = 2b/(1+b)$ for $b = 2, 3, \ldots$.
The set of jumps for $\kappa(\alpha)$ is a subset of the set of jumps for $q(\alpha)$.
For distinct pairs $(\kappa(\alpha), q(\alpha))$ we have in general a distinct polynomial functions $C_\ve(z)$ defined by \eqref{eq:Czeta}.
The first few ranges and values of $\alpha$ are presented in Table~\ref{tb:one}, as well as the terms that are present in the corresponding series.

\begin{table}[h!]
\begin{tabular}{rccl}
Range of $\alpha$ & $\kappa(\alpha)$ & $q(\alpha)$ & Terms \\[3mm]
$0 < \alpha < 4/3$  & $1$ & $2$ & $\epsilon_0$  \\
$\alpha = 4/3$ & $2$ & $2$ & $\epsilon_0, \epsilon_1, \zeta_1$ \\
$4/3 < \alpha \le 3/2$  & $2$ & $3$ & $\epsilon_0, \epsilon_1, \zeta_1$ \\
$3/2 < \alpha < 8/5$ & $2$ & $4$ & $\epsilon_0, \epsilon_1, \zeta_1$ \\
$\alpha = 8/5$ & $3$ & $4$ & $\epsilon_0, \epsilon_1, \epsilon_2, \zeta_1, \zeta_2$ \\
$8/5 < \alpha \le 5/3$  & $3$ & $5$ & $\epsilon_0, \epsilon_1, \epsilon_2, \zeta_1, \zeta_2$ \\
$5/3 < \alpha < 12/7$ & $3$ & $6$ & $\epsilon_0, \epsilon_1, \epsilon_2, \zeta_1, \zeta_2$ \\
$\alpha = 12/7$ & $4$ & $6$ & $\epsilon_0, \epsilon_1, \epsilon_2, \epsilon_3, \zeta_1, \zeta_2, \zeta_3$ \\
$12/7 < \alpha \le 7/4$  & $4$ & $7$ & $\epsilon_0, \epsilon_1, \epsilon_2, \epsilon_3, \zeta_1, \zeta_2, \zeta_3$ \\
$7/4 < \alpha < 16/9$ & $4$ & $8$ & $\epsilon_0, \epsilon_1, \epsilon_2, \epsilon_3, \zeta_1, \zeta_2, \zeta_3$ \\[3mm]
\end{tabular}
\caption{The leading terms in the series of pairs $(\kappa(\alpha), q(\alpha))$.\label{tb:one}}
\end{table}

\bibliographystyle{plain}
\bibliography{refs}

\def\cprime{$'$} \def\cprime{$'$} \def\cprime{$'$} \def\cprime{$'$}
  \def\cprime{$'$} \def\cprime{$'$}
\begin{thebibliography}{10}

\bibitem{Al-1992}
G.~Allaire.
\newblock Homogenization and two-scale convergence.
\newblock {\em SIAM J. Math. Anal.}, 23(6):1482--1518, 1992.

\bibitem{Allaire6}
G.~Allaire, G.~Bal, and V.~Siess.
\newblock Homogenization and localization in locally periodic transport.
\newblock {\em ESAIM Control Optim. Calc. Var.}, 8:1--30 (electronic), 2002.
\newblock A tribute to J. L. Lions.

\bibitem{Allaire5}
G.~Allaire and Y.~Capdeboscq.
\newblock Homogenization and localization for a 1-{D} eigenvalue problem in a
  periodic medium with an interface.
\newblock {\em Ann. Mat. Pura Appl. (4)}, 181(3):247--282, 2002.

\bibitem{Allaire7}
G.~Allaire, Y.~Capdeboscq, and A.~Piatnitski.
\newblock Homogenization and localization with an interface.
\newblock {\em Indiana Univ. Math. J.}, 52(6):1413--1446, 2003.

\bibitem{AlCaPiSiVa-04}
G.~Allaire., Y.~Capdeboscq, A.~Piatnitski, V.~Siess, and M.~Vannitathan.
\newblock Homogenization of periodic systems with large potentials.
\newblock {\em Arch. Ration. Mech. Anal.}, 174(2):179--220, 2004.

\bibitem{AlCo}
G.~Allaire and C.~Conca.
\newblock Bloch wave homogenization and spectral asymptotic analysis.
\newblock {\em J. Math. Pures Appl. (9)}, 77(2):153--208, 1998.

\bibitem{Allaire9}
G.~Allaire and L.~Friz.
\newblock Localization of high-frequency waves propagating in a locally
  periodic medium.
\newblock {\em Proc. Roy. Soc. Edinburgh Sect. A}, 140(5):897--926, 2010.

\bibitem{Allaire8}
G.~Allaire and M.~Palombaro.
\newblock Localization for the {S}chr\"odinger equation in a locally periodic
  medium.
\newblock {\em SIAM J. Math. Anal.}, 38(1):127--142 (electronic), 2006.

\bibitem{AlPi-2002}
G.~Allaire and A.~Piatnitski.
\newblock Uniform spectral asymptotics for singularly perturbed locally
  periodic operators.
\newblock {\em Comm. Partial Differential Equations}, 27(3-4):705--725, 2002.

\bibitem{anselone}
P.~M. Anselone.
\newblock {\em Collectively compact operator approximation theory and
  applications to integral equations}.
\newblock Prentice-Hall, Inc., Englewood Cliffs, N. J., 1971.
\newblock With an appendix by Joel Davis, Prentice-Hall Series in Automatic
  Computation.

\bibitem{ChPaPe-2014}
A.~Chechkina, I.~Pankratova, and K.~Pettersson.
\newblock Spectral asymptotics for a singularly perturbed fourth order locally
  periodic elliptic operator.
\newblock {\em Asymptotic Analysis}, 93(1--2):141--160, 2015.

\bibitem{piat2012homogenization}
V.~Chiad{\`o}~Piat.
\newblock Homogenization of spectral problems and localization effects.
\newblock {\em Rend. Sem. Mat. Univ. Politec. Torino}, 70(1):11--19, 2012.

\bibitem{piat2013localization}
V.~Chiad{\`o}~Piat, I.~Pankratova, and A.~Piatnitski.
\newblock Localization effect for a spectral problem in a perforated domain
  with fourier boundary conditions.
\newblock {\em SIAM Journal on Mathematical Analysis}, 45(3):1302--1327, 2013.

\bibitem{ferreira2014spectral}
R.~Ferreira, L.~Mascarenhas, and A.~Piatnitski.
\newblock Spectral analysis in thin tubes with axial heterogeneities.
\newblock 2014.

\bibitem{kreisbeck2014asymptotic}
C.~Kreisbeck and L.~Mascarenhas.
\newblock Asymptotic spectral analysis in semiconductor nanowire
  heterostructures.
\newblock {\em Applicable Analysis}, (ahead-of-print):1--39, 2014.

\bibitem{nguetseng}
G.~Nguetseng.
\newblock A general convergence result for a functional related to the theory
  of homogenization.
\newblock {\em SIAM J. Math. Anal.}, 20(3):608--623, 1989.

\bibitem{oleinik}
O.~A. Ole{\u\i}nik, A.~S. Shamaev, and G.~A. Yosifian.
\newblock {\em Mathematical problems in elasticity and homogenization},
  volume~26 of {\em Studies in Mathematics and its Applications}.
\newblock North-Holland Publishing Co., Amsterdam, 1992.

\bibitem{PaPe-2014}
I.~Pankratova and K.~Pettersson.
\newblock Spectral asymptotics for an elliptic operator in a locally periodic
  perforated domain.
\newblock {\em Applicable Analysis}, 94(6):1207--1234, 2015.

\bibitem{piatnitski2014ground}
A.~Piatnitski, A.~Rybalko, and V.~Rybalko.
\newblock Ground states of singularly perturbed convection-diffusion equation
  with oscillating coefficients.
\newblock {\em ESAIM: Control, Optimisation and Calculus of Variations},
  20(04):1059--1077, 2014.

\bibitem{piatnitski2012first}
A.~Piatnitski and V.~Rybalko.
\newblock On the first eigenpair of singularly perturbed operators with
  oscillating coefficients.
\newblock {\em arXiv preprint arXiv:1206.3754}, 2012.

\bibitem{VishLust}
M.~Vi{\v{s}}ik and L.~Lyusternik.
\newblock Regular degeneration and boundary layer for linear differential
  equations with small parameter.
\newblock {\em Uspehi Mat. Nauk (N.S.)}, 12(5(77)):3--122, 1957.

\bibitem{zhikovtwoscale}
V.~V. Zhikov.
\newblock On two-scale convergence.
\newblock {\em Tr. Semin. im. I. G. Petrovskogo}, (23):149--187, 410, 2003.

\end{thebibliography}

\end{document}